\documentclass[12pt,a4paper]{article}
\usepackage[T1,T2A]{fontenc}
\usepackage[cp1251]{inputenc}
\usepackage{mathrsfs}
\usepackage{amsmath}
\usepackage{amsthm}
\usepackage{amssymb}
\usepackage{amsfonts}
\usepackage[normalem]{ulem}
\usepackage{hyperref}

\makeatletter


\newtheorem{theorem}{Theorem}

\newtheorem{lemma}{Lemma}

\theoremstyle{definition}
\newtheorem{remark}{Remark}

\sloppy
\newcommand{\V}{\mathscr{V}}

\makeatother

\begin{document}
\title{\textbf{Normal Approximation for $U$- and $V$-statistics of a Stationary Absolutely Regular Sequence}}
\author{Vladimir G. Mikhailov\thanks{Steklov Mathematical Institute of Russian Academy of Sciences, Moscow, Russia}\\\vspace{-0.15cm}\footnotesize{\it{mikhail@mi-ras.ru}}\and Natalia M. Mezhennaya\thanks{Bauman Moscow State Technical University, Moscow, Russia}\\\vspace{-0.15cm}\footnotesize{\it{natalia.mezhennaya@gmail.com}}}
\date{}
\maketitle

\begin{abstract}
Let $(X_{n,t})_{t=1}^{\infty}$ be a stationary absolutely regular sequence of real random variables with the distribution dependent on the number~$n$. The paper presents sufficient conditions for the asymptotic normality (for $n\to\infty$ and common centering and normalization) of the distribution of the nonhomogeneous $U$-statistic of order $r$ which is given on the sequence $X_{n,1},\ldots,X_{n,n}$ with a kernel also dependent on $n$. The same results for $V$-statistics also hold.
To analyze sums of dependent random variables with rare strong dependencies, the proof uses the approach that was proposed by S.~Janson in 1988 and upgraded
by V.~Mikhailov in 1991 and M.~Tikhomirova and V.~Chistyakov in 2015.
\end{abstract}

\textbf{AMS Subject Classification:} 60F05, 05C90, 94C15

\textbf{Key words:} absolute regularity condition, characterizing graph, central limit theorem, dependency graph, $U$-statistic,  $V$-statistic, stationary sequence

\section*{Introduction}

The study of a special class of functionals of a sequence of random variables
$X_1,\ldots,X_n$ of the form
\begin{equation}\label{U1}
\hat U_n=\frac{1}{C_n^m}\sum_{1\le i_1 <\ldots < i_m\le n}f(X_{i_1},\ldots,X_{i_m}), \end{equation}
which were called {\it $U$-statistics}, began in the middle of the last century due to
the investigation of the properties of sample characteristics (see~\cite{Hoeff} and the
bibliography therein). The number $m$ is the {\it order} and symmetrical function $f(x_1,\ldots, x_m)$ is the {\it kernel} of $U$-statistic. Examples of $U$-statistics are sample moments, Gini's mean difference, Spearman's rank correlation, etc.

It is known that such variables as the number of repetitions and the number of repetitions of tuples~\cite{ZM74}--\cite{T09}, the number of pairs of $H$-equivalent tuples~\cite{Mikh02}--\cite{Sh08}, etc., in the random discrete sequence $X_1,\ldots,X_n$ belong (up to of the factor before the sum) to a class of quantities of the form~\eqref{U1}.

In the following decades, a large number of research papers appeared devoted to the asymptotic properties of $U$-statistics of sequences of independent
identically distributed random variables (see, e.g., the bibliography in~\cite{KorBor}).
In proving asymptotic normality in the case of increasing sums, W. Hoeffding~\cite{Hoeff} proposed a method for approximating the distribution of $U$-statistic
by distribution of the sum of specially constructed independent random variables. This approach, in different forms, is also used to study $U$-statistics of
sequences of random variables with conditions of weak or other dependence in the scheme of increasing sums (see, e.g.,~\cite{Yosh}--\cite{Dehling}).

The idea of the results of these papers is given by the following theorem of K.~Yoshihara ~\cite{Yosh} which we present in a simplified form.

Let $(X_{n})_{n=1}^{\infty}$ for every $n=1,2,\ldots$ be a strictly stationary sequence of real random variables satisfying the {\it absolute regularity condition}~\cite[p.\,3]{Dou}
\begin{equation}\label{eq:AR}
\beta(n)=\mathbf{E}\left\{\sup_{A\in {\cal F}_n^\infty}\left|\mathbf{P}\{A|{\cal F}_{-\infty}^0\}-\mathbf{P}\{A\}\right|\right\}\downarrow 0, \quad
n\to\infty,
\end{equation}
where ${\cal F}_a^b$ is the $\sigma$-algebra of events generated by the random variables $X_a,\ldots,X_b$.

Let
$$
\theta =
\mathbf{E}f(\tilde X_{1},\ldots,\tilde X_{m}), \quad f_1(x)= \mathbf{E}f(x, \tilde X_{2},\ldots,\tilde X_{m}),
$$
where $\tilde X_{1},\ldots,\tilde X_{m}$  are independent copies of $X_{1}$,
$$
\sigma^2 = \left(\mathbf{E}f_1^2(X_1)-\theta^2\right) + 2\sum_{t=1}^\infty \left(\mathbf{E}(f_1(X_1)f_1(X_{t+1}))-\theta^2\right).
$$

\begin{theorem}[Theorem 1 of~\cite{Yosh}]
Let $n\to\infty$ and there is a number $\delta>0$ such that
\begin{gather*}
\mathbf{E}\left|f(\tilde X_{1},\ldots,\tilde X_{m})\right|^{2+\delta} \le M_0<\infty, \label{eq3}
\\
\mathbf{E}\left|f(X_{i_1},\ldots,X_{i_m})\right|^{2+\delta} \le M_0<\infty \quad \forall \ 1\le i_1 <\ldots i_m \le n, \label{eq4}
\\
\beta(n)=O\left(n^{-(2+\delta')/\delta'}\right) \quad \mbox{for \ some} \quad \delta', \ 0<\delta'<\delta. \label{eq5}
\end{gather*}
Then, if $\sigma^2>0$ holds,
the distribution function of the random variable $\frac{\sqrt{n}}{m\sigma}(\hat U_n-\theta)$ converges to the distribution function of the standard normal law.
\end{theorem}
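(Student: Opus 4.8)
The plan is to follow the classical Hoeffding projection, using Yoshihara's covariance inequality to replace the orthogonality arguments that are available only for independent data. Set $g_1(x)=f_1(x)-\theta$; by stationarity $\mathbf{E}\,g_1(X_1)=\mathbf{E}\,f_1(\tilde X_1)-\theta=0$. Decompose the centered kernel as
$$
f(x_1,\dots,x_m)-\theta=\sum_{j=1}^m g_1(x_j)+\Phi(x_1,\dots,x_m),
$$
where $\Phi$ is defined by this identity. A direct computation gives $\mathbf{E}\bigl[\Phi(X_1,\dots,X_m)\mid X_j\bigr]=0$ for every $j$, i.e.\ $\Phi$ is orthogonal (for independent copies) to every function of a single coordinate. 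Summing over all increasing $m$-tuples and using $C_{n-1}^{m-1}/C_n^m=m/n$ yields
$$
\hat U_n-\theta=\frac{m}{n}\sum_{i=1}^n g_1(X_i)+R_n,\qquad
R_n=\frac{1}{C_n^m}\sum_{1\le i_1<\dots<i_m\le n}\Phi(X_{i_1},\dots,X_{i_m}).
$$
Since $\frac{\sqrt n}{m\sigma}\cdot\frac{m}{n}\sum_{i=1}^n g_1(X_i)=\frac{1}{\sigma\sqrt n}\sum_{i=1}^n g_1(X_i)$, the factor $m$ cancels and it suffices to prove $\frac{1}{\sqrt n}\sum_{i=1}^n g_1(X_i)\Rightarrow N(0,\sigma^2)$ together with $\sqrt n\,R_n\to0$ in probability.

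For the linear term I would apply a central limit theorem for partial sums of a stationary absolutely regular sequence. Jensen's inequality turns the moment hypothesis on $f$ into $\mathbf{E}|g_1(X_1)|^{2+\delta}<\infty$, and Davydov's covariance inequality (using $\alpha(t)\le\beta(t)$ with exponents $p=q=2+\delta$) gives $|\mathbf{E}(g_1(X_1)g_1(X_{t+1}))|\le C\,\beta(t)^{\delta/(2+\delta)}$. Under the hypothesis $\beta(t)=O\bigl(t^{-(2+\delta')/\delta'}\bigr)$ the resulting exponent satisfies $\frac{(2+\delta')\delta}{\delta'(2+\delta)}>1$ precisely because $\delta'<\delta$, so $\sum_t\beta(t)^{\delta/(2+\delta)}<\infty$. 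This guarantees both the convergence of the series defining $\sigma^2$ and the applicability of the Bernstein block / Ibragimov CLT for $\beta$-mixing sequences, with limiting variance exactly the $\sigma^2$ of the statement.

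The crux is to show $\mathbf{E}R_n^2=o(1/n)$. Expanding the square produces $(C_n^m)^{-2}\sum_{I,J}\mathbf{E}[\Phi_I\Phi_J]$, a double sum over increasing $m$-tuples $I,J$. The conditional degeneracy is decisive: for \emph{independent} copies $\mathbf{E}[\Phi_{\tilde I}\Phi_{\tilde J}]=0$ whenever $|I\cap J|\le1$ (if the tuples are disjoint this is the product of two zero means; if they share a single index $s$ one conditions on $\tilde X_s$, uses conditional independence of the two remaining blocks, and applies $\mathbf{E}[\Phi\mid X_s]=0$ to each factor). Hence in the independent world only pairs with $|I\cap J|\ge2$ survive, and there are only $O(n^{2m-2})$ of them, each bounded by $\|\Phi\|_2^2$, contributing $O(n^{2m-2})/(C_n^m)^2=O(1/n^2)$. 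For the genuine dependent sequence I would invoke Yoshihara's covariance inequality: applied to the product $\Phi_I\Phi_J$ (a function of at most $2m$ coordinates, with finite $(1+\delta/2)$-moment by Cauchy--Schwarz), it bounds the difference between the true cross-moment and its fully decoupled independent-copy counterpart by a sum of terms $C\,\beta(g)^{\delta/(2+\delta)}$ over the gaps $g$ at which the combined index set is decoupled. Summing these replacement errors over all $O(n^{2m})$ tuple pairs, organized by the decoupling gaps and using the summability $\sum_t\beta(t)^{\delta/(2+\delta)}<\infty$ together with the polynomial rate, yields a contribution of order $o(n^{2m-1})$, so that after division by $(C_n^m)^2$ the whole remainder variance is $o(1/n)$. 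Then $\sqrt n\,R_n\to0$ in $L^2$, and Slutsky's theorem completes the argument.

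The main obstacle is exactly this last variance estimate. In the independent case the degeneracy $\mathbf{E}[\Phi\mid X_j]=0$ makes all pairs with $|I\cap J|\le1$ vanish identically and $\mathbf{E}R_n^2=O(1/n^2)$ is immediate; under dependence these pairs no longer vanish, and one must quantify their residual correlation through Yoshihara's inequality and then verify that the combinatorial count of index configurations at each decoupling distance, weighted by $\beta(\cdot)^{\delta/(2+\delta)}$, still sums to $o(n^{2m-1})$. It is precisely to close this bookkeeping that the hypothesis couples the moment order $2+\delta$ with the strictly smaller mixing exponent governed by $\delta'$; balancing the powers of $n$ against the mixing decay for every order $m$ is the delicate point of the proof.
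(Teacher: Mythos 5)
Your overall strategy (Hoeffding projection plus Yoshihara's decoupling lemma) is the right one: the paper does not prove this theorem itself --- it quotes it from Yoshihara as background --- but it explicitly attributes Yoshihara's proof to Hoeffding's projection method, and the decoupling inequality you invoke is exactly the tool (Yoshihara's Lemma~1) that the paper borrows for its own Lemma~2. Your treatment of the linear part is also fine: the exponent check $\frac{(2+\delta')\delta}{\delta'(2+\delta)}>1\iff\delta'<\delta$, Jensen for the moments of $f_1$, and Ibragimov's CLT via $\alpha(t)\le\beta(t)$ are all correct.

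The genuine gap is the step you yourself call the crux, $\mathbf{E}R_n^2=o(1/n)$, and it fails in two concrete ways as written. First, the bookkeeping: your mechanism is ``replace the true cross-moment by its fully decoupled counterpart (which vanishes for $|I\cap J|\le 1$) and sum the replacement errors $\beta(g_k)^{\delta/(2+\delta)}$ over the gaps.'' But for each fixed gap position $k$ and each gap value $g$ there are of order $n^{2m-1}$ pairs $(I,J)$, so this sum is of order $n^{2m-1}\sum_{g}\beta(g)^{\delta/(2+\delta)}$, i.e.\ a \emph{constant} times $n^{2m-1}$: summability of $\beta^{\delta/(2+\delta)}$ gives only $\mathbf{E}R_n^2=O(1/n)$, hence $\sqrt{n}\,R_n=O_P(1)$ (tightness), not $\sqrt{n}\,R_n\to 0$. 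To get the strict $o$ one must bound each moment not by the sum of $\beta$'s over \emph{all} gaps but by $\beta$ evaluated at a gap that \emph{isolates a single extreme coordinate}, and then use the strictly-faster-than-summable polynomial rate $\beta(t)^{\delta/(2+\delta)}=O(t^{-\rho})$ with $\rho>1$ (this is precisely where $\delta'<\delta$ enters), which yields a bound $O(n^{2m-\rho})=o(n^{2m-1})$. Second, that refined argument is unavailable for your $\Phi$ when $m\ge 3$: your degeneracy $\mathbf{E}[\Phi\mid X_j]=0$ means that integrating out $m-1$ coordinates with one fixed gives zero, whereas isolating and integrating out a \emph{single} coordinate of $\Phi_I$ (which is what decoupling at one gap lets you do) does not annihilate $\Phi$. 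One needs \emph{completely degenerate} (canonical) kernels, i.e.\ the full Hoeffding decomposition $f-\theta=\sum_{k=1}^{m}\sum_{|S|=k}h_k(x_S)$ with $\int h_k\,dF(x_j)=0$ for every single coordinate $x_j$, treating each component $h_k$, $k\ge 2$, separately; your first-order projection $\Phi$ is canonical only for $m=2$. Closing these two holes is essentially the entire technical content of Yoshihara's proof, so as it stands the proposal is a correct plan with the decisive estimate missing.
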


The research paper~\cite{Hashim93}(see also~\cite{Hashim87,Hashim92}) was devoted to adaptation the results of K.~Yoshihara to triangular array schemes.
Sh.~Khashimov in~\cite{Hashim93} considered the case of second-order $U$-statistic whose kernel $f(x_1, x_2) = f_n(x_1, x_2)$ can change for $n\to \infty$. Again, the method of W.~Hoeffding~\cite{Hoeff} was used.

The method of moments was no less promising for studying $U$-statistics in the triangular array scheme for dependent random variables.
Back in 1975, V.~Mikhailov~\cite{Mikh75}, using the direct application of this method, derived sufficient conditions for asymptotic normality for a special case of $U$-statistics of a sequence of finitely dependent random variables in a triangular array scheme (let's call it \textit{the wide triangular array scheme}), where for $n\to \infty$ changes are allowed both to the kernel $f_n(x_1,\ldots,x_m)$ and the distribution of the sequence  $X_{n,1},X_{n,2},\ldots$ (now in the notation we have to indicate dependence of the kernel and distribution on $n$).

A modern variation of the method of moments which was proposed by Svante Janson~\cite{Janson Svante,M91} and upgraded by V.~Mikhailov in~\cite{M91} and  by M.~Tikhomirova and V.~Chistyakov in~\cite{TihChis} allows to obtain simpler and substantially more general sufficient conditions for the asymptotic normality of $U$- and $V$-statistics of any order of a sequence of random variables satisfying the absolute regularity condition  in the wide triangular array scheme which we present in this paper. These results complement the results of K.~Yoshihara~\cite{Yosh} and V.~Mikhailov~\cite{Mikh75}.

It also should be noted that for problems related to tuples in a discrete random sequence (see, e.g., \cite{MSh03,Sh05,MSh14,Mikh15}), the present results allow to consider the case of simultaneous consistent growth of the length of the random sequence $n$ and the length of the tuple $s$ to infinity. A separate work is supposed to be devoted to these applications.

\section{Limit Theorems}

Let $(X_{n,t})_{t=1}^{\infty}$ for every $n=1,2,\ldots$ be a strictly stationary sequence of real random variables (e.g., the joint probability distribution function of $(X_{n,t_k+\tau})_{k=1}^{m}$ is equal to the joint  probability distribution function of $(X_{n,t_k})_{k=1}^{m}$ for all $\tau,t_1,\ldots,t_m=1,2,\ldots,\infty$  and all $m\ge 1$)
satisfying the absolute regularity condition~\eqref{eq:AR}. 

Let $f_{n;j_1,\ldots,j_r}: \mathbb{R}^r\to \mathbb{R}$ be a bounded measurable function for every $n\ge 1$ and $1\le j_1<\ldots<j_r\le n:$
$$|f_{n;j_1,\ldots,j_r}(x_{j_1},\ldots,x_{j_r})|\le F_n<\infty.$$

The functionals called the {\it nonhomogeneous $U$-statistic} and {\it $V$-statistic} with the kernel $f_{n;j_1,\ldots,j_r}$ are given by the formulas (the definitions are given in~\cite{Hoeff} or~\cite{KorBor}):
\begin{gather}
U_n=U_n(X_{n,1},\ldots,X_{n,n})=\sum_{1\le j_1<\ldots<j_r\le n}f_{n;j_1,\ldots,j_r}(X_{n,j_1},\ldots,X_{n,j_r}),\label{U}
\\
V_n=V_n(X_{n,1},\ldots,X_{n,n})=\sum_{ j_1,\ldots,j_r=1}^{n} f_{n;j_1,\ldots,j_r}(X_{n,j_1},\ldots,X_{n,j_r}),\label{V}
\end{gather}
respectively (in contrast to the traditional definition~\eqref{U1}, the factors $1/C_n^r$ and $1/n^r$ are omitted before the sums).

Let  
$$U_n^*=\frac{U_n-\mathbf{E}U_n}{\sqrt{\mathbf{D}U_n}},\quad V_n^*=\frac{V_n-\mathbf{E}V_n}{\sqrt{\mathbf{D}V_n}}.$$

\begin{theorem}\label{th1}
Let $(X_{n,t})_{t=1}^{\infty}$ for every $n=1,2,\ldots$ be a strictly stationary sequence of real random variables satisfying the absolute regularity condition~\eqref{eq:AR}.
For $n\rightarrow\infty,$ let the distribution of $(X_{n,t})_{t=1}^{\infty}$, the measurable $($for every $n\ge 1)$ function family $\left\{f_{n;j_1,\ldots,j_r},1\le j_1<\ldots<j_r\le n\right\}$, the number $m_n$ and the other parameters marked by index $n$  vary so that  $b_{0}\in(0,2/3]$ exists such that for every natural number 
$R$ and all $b\in(0,b_{0}]$
\begin{equation}\label{eq:CondTheorem2}
\frac{F_n^2 m_n^{2-b}n^{2(r-1)+b}r^{4-2b}}{\mathbf{D}U_{n}}+\bigl(\beta_n(m_n)\bigr)^{b}\frac{F_n^{2} n^{2r}}{\mathbf{D}U_{n}}\rightarrow 0.
\end{equation}
Then the moments and distribution function of the random variable $U_n^{*}$  converge to the moments and distribution function of the standard normal law. \rm
\end{theorem}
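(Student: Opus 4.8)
The plan is to establish convergence of all moments of $U_n^{*}$ to those of the standard normal law by the method of cumulants (semi-invariants), and then to deduce weak convergence from the fact that $N(0,1)$ is uniquely determined by its moments (Fr\'echet--Shohat). Write $\mathrm{cum}_k$ for the $k$-th cumulant. Since $\mathrm{cum}_1(U_n^{*})=0$ and $\mathrm{cum}_2(U_n^{*})=\mathbf{D}U_n/\mathbf{D}U_n=1$ identically, the first two cumulants already coincide with those of $N(0,1)$. Hence it suffices to show that for each fixed integer $R=k\ge 3$ one has $\mathrm{cum}_k(U_n^{*})\to 0$; the order $R$ of the controlled moment is what the quantifier ``for every natural number $R$'' in the hypothesis governs, while the parameter $b$ will be tuned to $R$.

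The heart of the argument is an estimate for $\mathrm{cum}_k(U_n)$ via the Janson--Mikhailov--Tikhomirova--Chistyakov technique. By multilinearity,
\begin{equation*}
\mathrm{cum}_k(U_n)=\sum_{\mathbf{j}^{(1)},\ldots,\mathbf{j}^{(k)}}\mathrm{cum}\bigl(f_{n;\mathbf{j}^{(1)}},\ldots,f_{n;\mathbf{j}^{(k)}}\bigr),
\end{equation*}
each $\mathbf{j}^{(i)}$ running over increasing $r$-tuples in $\{1,\ldots,n\}$. On the $k$ chosen tuples I introduce the \emph{characterizing graph}: two tuples are joined when their index sets lie within distance $m_n$ of one another. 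I split the sum into the \emph{main part}, over configurations whose characterizing graph is connected, and the \emph{remainder}, over configurations that split into groups pairwise separated by gaps exceeding $m_n$.

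For the main part each joint cumulant is bounded crudely by $C(k)F_n^{k}$, and the connected configurations are counted by a spanning-tree argument: the root tuple admits at most $n^{r}$ choices, and each of the remaining $k-1$ tuples, being attached within distance $m_n$ to the current cluster, admits at most $3r^{2}m_n n^{r-1}$ choices. Thus the main part is at most $C(k)F_n^{k}\,n^{(r-1)k+1}m_n^{\,k-1}r^{2k-2}$, which after division by $(\mathbf{D}U_n)^{k/2}$ equals exactly $C(k)\,P_n(2/k)^{k/2}$, where $P_n(b)$ is the first summand of~\eqref{eq:CondTheorem2}; the choice $b=2/k$ is precisely the one equalizing the competing powers of $n$, $m_n$ and $r$ thrown up by the tree count. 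For the remainder I invoke the absolute regularity inequality: the joint cumulant of any configuration cut by a gap $>m_n$ is at most $C(k)\beta_n(m_n)F_n^{k}$, and there are at most $n^{rk}$ such configurations, so after normalization this part is at most $C(k)\,Q_n(2/k)^{k/2}$, with $Q_n(b)$ the second summand of~\eqref{eq:CondTheorem2}.

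Combining, for every $k\ge 3$,
\begin{equation*}
\bigl|\mathrm{cum}_k(U_n^{*})\bigr|\le C(k)\Bigl(P_n(2/k)^{k/2}+Q_n(2/k)^{k/2}\Bigr).
\end{equation*}
For $k\ge 3$ the value $b=2/k$ belongs to $(0,2/3]$ and is therefore admissible in~\eqref{eq:CondTheorem2} (the endpoint $b=2/3$, used for $k=3$, is the binding case), so the hypothesis forces $P_n(2/k)\to 0$ and $Q_n(2/k)\to 0$, whence $\mathrm{cum}_k(U_n^{*})\to 0$. All cumulants of order $\ge 3$ thus vanish in the limit while the first two match $N(0,1)$, so every moment of $U_n^{*}$ converges to the corresponding normal moment and weak convergence follows. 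I expect the remainder estimate to be the main obstacle: unlike the classical dependency-graph setting, cut configurations do not yield exactly zero cumulants, and one must control their approximate factorization through $\beta_n(m_n)$, tracking the combinatorial multiplicities of the admissible cuts so as to extract precisely the first power of $\beta_n(m_n)$ (equivalently $Q_n(2/k)^{k/2}$). It is exactly here that the refinement of Janson's method by Mikhailov and by Tikhomirova and Chistyakov is needed.
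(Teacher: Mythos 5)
Your reconstruction of the cumulant machinery is faithful in its combinatorics: the spanning-tree attachment count $3r^{2}m_{n}n^{r-1}$ is exactly the paper's bound~\eqref{L1} on $|L(\alpha)|$, and your calibration $b=2/k$ does reproduce both summands of~\eqref{eq:CondTheorem2} precisely. But there is a genuine gap at the step where you declare $b=2/k$ admissible for every $k\ge 3$ ``because $2/k\in(0,2/3]$''. The hypothesis does not assert~\eqref{eq:CondTheorem2} for all $b\in(0,2/3]$; it asserts it only for $b\in(0,b_{0}]$, where $b_{0}$ may be strictly smaller than $2/3$. The two summands behave oppositely in $b$: writing the first as
\begin{equation*}
P_{n}(b)=\frac{F_{n}^{2}m_{n}^{2}n^{2(r-1)}r^{4}}{\mathbf{D}U_{n}}\left(\frac{n}{m_{n}r^{2}}\right)^{b},
\end{equation*}
it is nondecreasing in $b$ whenever $m_{n}r^{2}\le n$ (the typical regime), so its convergence to zero for $b\le b_{0}$ implies nothing for $b\in(b_{0},2/3]$; only the second summand extends upward automatically, since $\beta_{n}(m_{n})\le 1$ makes $\bigl(\beta_{n}(m_{n})\bigr)^{b}$ nonincreasing in $b$. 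Consequently, for the finitely many orders $3\le k<2/b_{0}$ your bound $C(k)P_{n}(2/k)^{k/2}$ on the $k$-th normalized cumulant is not known to tend to zero --- it can even diverge --- and your plan of proving that \emph{every} cumulant of order $\ge 3$ vanishes cannot be executed under the stated hypothesis.

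This is not a removable technicality but precisely the subtlety that the Janson--Mikhailov--Tikhomirova--Chistyakov framework exists to absorb, and it is why the paper does not redo the cumulant analysis at all: it constructs the characterizing graph $\varGamma_{n,m}$, proves the approximate factorization property~\eqref{eq:Chis1} with $\gamma_{R}=8R\beta_{n}(m)$ via the Yoshihara-type Lemmas~\ref{l1} and~\ref{l2}, bounds $Q_{R,n,m}\le r^{2}RF_{n}(2m+1)C_{n}^{r-1}$ and $M_{n}\le F_{n}C_{n}^{r}$, and then invokes Theorem~\ref{TC1} as a black box. The point of that theorem is that its condition is imposed only for \emph{small} $b$ (equivalently, only for high cumulant orders), while the finitely many low orders $3\le k<2/b_{0}$ are handled inside its proof by a different mechanism (one does not show these cumulants vanish individually; roughly, one passes to subsequential limits and uses that a law whose cumulants vanish from some finite order onward must be Gaussian). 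To complete your blind proof you would have to either strengthen the hypothesis to ``for all $b\in(0,2/3]$'' --- which proves a strictly weaker theorem --- or import that additional argument for the low orders, i.e.\ essentially reprove Theorem~\ref{TC1}. Your remainder estimate (extracting a single factor $\beta_{n}(m_{n})$ from gap-separated configurations), which you flag as the main obstacle, is by contrast repairable: it is exactly what the paper's Lemma~\ref{l2} provides.
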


A similar statement holds for $V$-statistics.

\begin{theorem}\label{th1a}
Let $(X_{n,t})_{t=1}^{\infty}$ for every $n=1,2,\ldots$ be a strictly stationary sequence of real random variables satisfying the absolute regularity condition~\eqref{eq:AR}.
For $n\rightarrow\infty,$ let the distribution of $(X_{n,t})_{t=1}^{\infty}$, the measurable $($for every $n\ge 1)$ function family $\left\{f_{n;j_1,\ldots,j_r},1\le j_1<\ldots<j_r\le n\right\}$, the number $m_n$ and the other parameters marked by index $n$  vary so that  $b_{0}\in(0,2/3]$ exists such that for every natural number 
$R$ and all $b\in(0,b_{0}]$
\begin{equation}
\frac{F_n^2 m_n^{2-b}n^{2(r-1)+b}r^{4-2b}}{\mathbf{D}V_{n}}+\bigl(\beta_n(m_n)\bigr)^{b}\frac{F_n^{2} n^{2r}}{\mathbf{D}V_{n}}\rightarrow 0.
\end{equation}
Then the moments and distribution function of the random variable $V_n^{*}$  converge to the moments and distribution function of the standard normal law. \rm
\end{theorem}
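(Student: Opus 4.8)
\textit{Proof plan.} I would prove Theorem~\ref{th1a} by repeating the method-of-moments argument used for Theorem~\ref{th1}, the single structural change being that the summation index now runs over all $r$-tuples $\mathbf{j}=(j_1,\dots,j_r)\in\{1,\dots,n\}^{r}$ instead of the strictly increasing tuples. By the Fr\'echet--Shohat theorem it is enough to show that, for each fixed moment order $R$, the normalized moment $\mathbf{E}(V_n^{*})^{R}$ tends to the $R$-th moment of the standard normal law: $0$ for odd $R$ and $(R-1)!!$ for even $R$. Setting $\eta_{\mathbf{j}}=f_{n;\mathbf{j}}(X_{n,j_1},\dots,X_{n,j_r})-\mathbf{E}f_{n;\mathbf{j}}(X_{n,j_1},\dots,X_{n,j_r})$, so that $|\eta_{\mathbf{j}}|\le 2F_n$ and $V_n-\mathbf{E}V_n=\sum_{\mathbf{j}}\eta_{\mathbf{j}}$, the starting point is the expansion
\[
\mathbf{E}\bigl(V_n-\mathbf{E}V_n\bigr)^{R}=\sum_{\mathbf{j}^{(1)},\dots,\mathbf{j}^{(R)}}\mathbf{E}\bigl(\eta_{\mathbf{j}^{(1)}}\cdots\eta_{\mathbf{j}^{(R)}}\bigr),
\]
to be divided afterwards by $(\mathbf{D}V_n)^{R/2}$.

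The second step organizes this $R$-fold sum through the characterizing graph on the $R$ tuples: two tuples are joined by an edge when the gap between their index sets, regarded as subsets of $\{1,\dots,n\}$, is smaller than $m_n$, and the tuples are partitioned into the connected components of this graph. Absolute regularity lets me factorize the joint expectation of a product of $\eta$'s into the product over components that are mutually separated by a gap of at least $m_n$; the approximation error, obtained from a bounded-variable $\beta$-mixing covariance estimate and then interpolated against the crude bound $|\eta_{\mathbf{j}}|\le 2F_n$ by means of the exponent $b\in(0,b_0]$, is of order $(\beta_n(m_n))^{b}F_n^{2}n^{2r}$ per factorization---precisely the second summand of the hypothesis. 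Since the $\eta_{\mathbf{j}}$ are centered, any component consisting of a single tuple forces the corresponding factor to vanish after decoupling, so only configurations whose every component contains at least two tuples survive.

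For even $R=2p$ the leading contribution comes from the configurations with exactly $p$ components, each a pair of mutually close tuples and the pairs well separated from one another; summing $\mathbf{E}(\eta_{\mathbf{j}}\eta_{\mathbf{j}'})$ over close pairs reconstitutes one factor of $\mathbf{D}V_n$ per pair, while the number of pairings of $2p$ objects equals $(2p-1)!!=(R-1)!!$, which is the sought Gaussian moment after normalization. Every other configuration is degenerate---it has a component of size at least three, or a coincidence or forced proximity among indices of different pairs, or one of the $O(r^{2}n^{r-1})$ diagonal tuples with a repeated coordinate---and therefore possesses strictly fewer freely varying indices. The first summand $F_n^{2}m_n^{2-b}n^{2(r-1)+b}r^{4-2b}/\mathbf{D}V_n$ of the hypothesis is exactly what bounds the per-pair cost of such a degeneracy: the exponents record the placement of two tuples constrained to a common cluster ($\sim n^{2(r-1)}$ free positions, two positions confined to windows of length $m_n$, and $\sim r^{4}$ choices of the constrained coordinates), with the interpolation parameter $b$ trading a fraction of the mixing window $m_n$ against $n$ and against the order $r$.

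The principal difficulty is the uniform combinatorial bookkeeping of this third step: one must show that, summed over all degenerate characterizing graphs on $R$ tuples and divided by $(\mathbf{D}V_n)^{R/2}$, the total is dominated by a fixed power (depending only on $R$) of the two quantities in the hypothesis, so that the single choice of $b$ renders every non-matching contribution $o(1)$. This reduces to checking that each edge beyond a perfect matching, and each enforced index coincidence or near-coincidence, lowers the order of a term by a genuine factor that the condition sends to zero; since these are the $V$-statistic analogues of the estimates already carried out for Theorem~\ref{th1}---obtained by replacing ordered tuples with arbitrary $r$-tuples, absorbing the negligible diagonal into the first summand, and putting $\mathbf{D}V_n$ for $\mathbf{D}U_n$---the convergence of all moments, and hence of the distribution function, follows.
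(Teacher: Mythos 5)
Your plan is correct in outline and it hits the key structural point --- the $V$-statistic case is the $U$-statistic case run on all $r$-tuples $(j_1,\dots,j_r)\in\{1,\dots,n\}^r$ rather than the increasing ones, with $\mathbf{D}V_n$ in place of $\mathbf{D}U_n$ --- but it takes a genuinely different route from the paper. The paper never expands moments at all: it constructs the characterizing graph $\varGamma_{n,m}$ on the $n^r$ vertices with edges given by~\eqref{eq:CondV}, uses Lemma~\ref{l2} (a consequence of Yoshihara's lemma under absolute regularity) to verify property~\eqref{eq:Chis1} with $\gamma_{R,n,m}=8R\beta_n(m)$, bounds $Q_{R,n,m}\le r^2RF_n(2m+1)n^{r-1}$ and $M_n\le F_nn^r$ (the only change from Theorem~\ref{th1} being the replacement $C_n^s\mapsto n^s$, $s=r-1,r$, which does not alter the form of the condition), and then invokes Theorem~\ref{TC1} of~\cite{TihChis} as a black box; its hypothesis~\eqref{eq:CondTh1} becomes exactly the displayed condition, since $M_n^b\,Q_{R,n,m}^{2-b}$ is of order $F_n^2r^{4-2b}m_n^{2-b}n^{2(r-1)+b}$. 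What you propose is, in effect, to re-prove Theorem~\ref{TC1} in this special case: your Fr\'echet--Shohat reduction, component decomposition, decoupling estimate, matching count $(R-1)!!$, and the $b$-interpolation are precisely the internal machinery of~\cite{TihChis}. That is legitimate and self-contained, but your closing appeal to ``estimates already carried out for Theorem~\ref{th1}'' is misplaced: the paper's proof of Theorem~\ref{th1} contains no moment bookkeeping either --- it only verifies the hypotheses of Theorem~\ref{TC1} --- so the uniform combinatorial step that you correctly single out as the principal difficulty cannot be borrowed from there and would have to be written out in full (or, as the paper does, delegated to the citation). The trade-off: the paper's argument for $V$-statistics is a few lines given the $U$-statistic case, while yours is longer but independent of~\cite{TihChis}; also, your separate treatment of the diagonal tuples as negligible degenerate terms is unnecessary in the paper's setup, where they are simply ordinary vertices of the characterizing graph.
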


We consider a special case in which the absolute regularity coefficient~\eqref{eq:AR} decreases faster than any degree of $t$ for $t\to\infty$.

\begin{theorem}\label{th2}
Let $(X_{n,t})_{t=1}^{\infty}$ for every $n=1,2,\ldots$ be a strictly stationary sequence of real random variables satisfying the absolute regularity condition~\eqref{eq:AR}.
Let the number $F_n$ be fixed starting at some value of  $n,$  $n\to\infty,$  and the joint distribution of the random variables 
$X_{n,1},\ldots,X_{n,n}$ and the measurable function $|f_{n;j_1,\ldots,j_r}|<F_n$ vary so that
$$
\beta_n(t)\le t^{-h(t)}, \quad  \mathbf{D}U_n\ge Cn^{2(r-1)+\varkappa}, \quad C, \varkappa>0,
$$
where the positive function $h(t)\to\infty$ for $t\to\infty$. Then the moments and distribution function of the random variable $U_n^{*}$ 
converge to the moments and distribution function of the standard normal law. \rm
\end{theorem}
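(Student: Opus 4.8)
The plan is to derive Theorem~\ref{th2} as a corollary of Theorem~\ref{th1} by exhibiting a concrete admissible sequence $m_n$ and checking that condition~\eqref{eq:CondTheorem2} is satisfied. Since the hypotheses give $F_n\le F$ fixed, $r$ fixed, and $\mathbf{D}U_n\ge Cn^{2(r-1)+\varkappa}$, I would first set $m_n=\lfloor n^{\gamma}\rfloor$ with a small exponent $0<\gamma<\varkappa/2$ (and $\gamma\le 1$, so that $m_n\le n$), and then substitute this choice into the left-hand side of~\eqref{eq:CondTheorem2}, estimating the two summands separately. Note at the outset that this $m_n$ will not depend on $R$, so the requirement ``for every natural number $R$'' is met automatically.

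For the first summand, using $r^{4-2b}\le r^{4}$, $m_n^{2-b}\le n^{\gamma(2-b)}$, and the lower bound on $\mathbf{D}U_n$, I would obtain
$$\frac{F_n^2 m_n^{2-b}n^{2(r-1)+b}r^{4-2b}}{\mathbf{D}U_{n}}\le \frac{F^{2}r^{4}}{C}\,n^{\gamma(2-b)+b-\varkappa}.$$
The exponent equals $2\gamma-\varkappa+b(1-\gamma)$; since $2\gamma-\varkappa<0$ by the choice of $\gamma$, and the exponent is nondecreasing in $b$, it is strictly negative for all $b$ in a sufficiently small interval $(0,b_{0}]$. This fixes the admissible value $b_{0}\in(0,2/3]$ (concretely one may take $b_{0}=\min\{2/3,(\varkappa-2\gamma)/(2(1-\gamma))\}$), and the first summand then tends to $0$ uniformly over such $b$.

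For the second summand the key point is that $\beta_n(m_n)\le m_n^{-h(m_n)}=n^{-\gamma h(n^{\gamma})}$ (up to the harmless floor correction) decays faster than any power of $n$, because $h(t)\to\infty$. Hence, again using $\mathbf{D}U_n\ge Cn^{2(r-1)+\varkappa}$,
$$\bigl(\beta_n(m_n)\bigr)^{b}\frac{F_n^{2}n^{2r}}{\mathbf{D}U_{n}}\le \frac{F^{2}}{C}\,n^{-b\gamma h(n^{\gamma})+2-\varkappa},$$
and for every fixed $b>0$ the term $-b\gamma h(n^{\gamma})\to-\infty$ dominates the constant $2-\varkappa$, so this summand also tends to $0$. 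Combining the two estimates shows that~\eqref{eq:CondTheorem2} holds for all $b\in(0,b_{0}]$, and Theorem~\ref{th1} then yields the convergence of the moments and of the distribution function of $U_n^{*}$ to those of the standard normal law.

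I do not anticipate a serious obstacle, since the statement is essentially a specialization of Theorem~\ref{th1}. The only point requiring genuine care is the coordinated choice of $\gamma$ and $b_{0}$: the parameter $m_n$ must grow fast enough that the super-polynomial decay of $\beta_n(m_n)$, guaranteed by $h(t)\to\infty$, neutralizes the large factor $n^{2r}$ in the second summand, while growing slowly enough that $m_n^{2-b}$ does not overwhelm the gain $n^{-\varkappa}$ from the variance lower bound in the first summand; the inequality $\gamma<\varkappa/2$ is precisely what reconciles these two requirements.
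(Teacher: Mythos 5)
Your proposal is correct and follows essentially the same route as the paper: the paper likewise deduces Theorem~\ref{th2} from Theorem~\ref{th1} by choosing $m_n$ as a small power of $n$ (specifically $m_n=\bigl[n^{(\varkappa-b_0)/4}\bigr]$ with $0<b_0<\min\{2/3,\varkappa\}$) and then checking that the two summands of~\eqref{eq:CondTheorem2} vanish, exactly as you do. The only difference is bookkeeping --- the paper fixes $b_0$ first and lets the exponent of $m_n$ depend on it, whereas you fix the exponent $\gamma$ first and then choose $b_0$; just take $\gamma<\min\{1,\varkappa/2\}$ strictly so that your concrete formula for $b_0$ avoids division by zero.
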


\begin{theorem}\label{th2a}
Let $(X_{n,t})_{t=1}^{\infty}$ for every $n=1,2,\ldots$ be a strictly stationary sequence of real random variables satisfying the absolute regularity condition~\eqref{eq:AR}.
Let the number $F_n$ be fixed starting at some value of  $n,$  $n\to\infty,$  and the joint distribution of the random variables 
$X_{n,1},\ldots,X_{n,n}$ and the measurable function $|f_{n;j_1,\ldots,j_r}|<F_n$ vary so that
$$
\beta_n(t)\le t^{-h(t)}, \quad  \mathbf{D}V_n\ge Cn^{2(r-1)+\varkappa}, \quad C, \varkappa>0,
$$
where the positive function $h(t)\to\infty$ for $t\to\infty$. Then the moments and distribution function of the random variable $V_n^*$
converge to the moments and distribution function of the standard normal law. \rm
\end{theorem}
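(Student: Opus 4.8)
The plan is to deduce Theorem~\ref{th2a} from Theorem~\ref{th1a}, exactly mirroring the argument one would give for the $U$-statistic version (Theorem~\ref{th2}) with $\mathbf{D}U_n$ everywhere replaced by $\mathbf{D}V_n$. Since here the order $r$ is fixed and $F_n\equiv F$ from some $n$ on, the factors $F_n^2$ and $r^{4-2b}$ are eventually bounded constants, and it suffices to exhibit a single admissible sequence $m_n$ and a threshold $b_0\in(0,2/3]$ for which
\[
\frac{F_n^2 m_n^{2-b}n^{2(r-1)+b}r^{4-2b}}{\mathbf{D}V_{n}}+\bigl(\beta_n(m_n)\bigr)^{b}\frac{F_n^{2} n^{2r}}{\mathbf{D}V_{n}}\rightarrow 0
\]
for every $b\in(0,b_0]$. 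I would set $m_n=\lfloor n^{a}\rfloor$ for a constant $a$ with $0<a<\varkappa/2$, to be fixed below, and then insert the variance lower bound $\mathbf{D}V_n\ge Cn^{2(r-1)+\varkappa}$ into both summands.

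For the first summand, the bound $\mathbf{D}V_n\ge Cn^{2(r-1)+\varkappa}$ cancels the factor $n^{2(r-1)}$ and leaves, up to a fixed constant, the quantity $m_n^{2-b}n^{b-\varkappa}\le n^{\,2a-\varkappa+b(1-a)}$. The exponent $E(b)=2a-\varkappa+b(1-a)$ is affine in $b$ and satisfies $E(0)=2a-\varkappa<0$ by the choice of $a$; by continuity there is $b_0\in(0,2/3]$ so small that $E(b)<0$ for all $b\in(0,b_0]$. Hence the first summand tends to $0$ uniformly in $b\in(0,b_0]$.

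For the second summand I would use $\beta_n(m_n)\le m_n^{-h(m_n)}$ together with the same variance bound, which reduces the polynomial factor to $n^{2r}/\mathbf{D}V_n\le C^{-1}n^{2-\varkappa}$. Thus this summand is at most a constant multiple of $m_n^{-b\,h(m_n)}\,n^{2-\varkappa}$. Taking logarithms, $\log\bigl(m_n^{-b\,h(m_n)}n^{2-\varkappa}\bigr)=(2-\varkappa)\log n-b\,h(m_n)\log m_n$, and since $h(m_n)\to\infty$ while $\log m_n\sim a\log n$, this tends to $-\infty$; that is, the faster-than-polynomial decay of $\beta_n$ defeats the fixed power $n^{2-\varkappa}$ for every fixed $b>0$. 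Combining the two estimates verifies the hypothesis of Theorem~\ref{th1a}, so the moments and distribution function of $V_n^{*}$ converge to those of the standard normal law.

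I expect the main obstacle to be the uniformity in $b$ in the first summand: the single sequence $m_n=\lfloor n^{a}\rfloor$ must work simultaneously for all $b\in(0,b_0]$, which is precisely what forces the affine analysis of $E(b)$ and pins down how small $b_0$ must be taken relative to $a$ and $\varkappa$. By contrast, the second summand is harmless for any fixed admissible $b$ because of the superpolynomial decay of $\beta_n$. The only remaining points to check are routine: that rounding $n^{a}$ to the integer $m_n$ alters each power by a bounded factor, that $\mathbf{D}V_n>0$ eventually (so $V_n^{*}$ is well defined), and that all absorbed constants are independent of $n$.
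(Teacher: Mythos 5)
Your proposal is correct and follows essentially the same route as the paper: the paper also deduces Theorem~\ref{th2a} from Theorem~\ref{th1a} (via the remark that the $V$-statistic case repeats the $U$-statistic argument with $C_n^s$ replaced by $n^s$), reduces the hypothesis to $m_n^{2-b}n^{b-\varkappa}+m_n^{-h(m_n)b}n^{2-\varkappa}\to 0$, and verifies it by taking $m_n$ to be a small power of $n$, namely $m_n=\bigl[n^{(\varkappa-b_0)/4}\bigr]$ with $0<b_0<\min\{2/3,\varkappa\}$, which differs from your choice $m_n=\lfloor n^a\rfloor$, $a<\varkappa/2$, only in the order in which $a$ and $b_0$ are fixed.
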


\section{On the Method of Proving of Limit Theorems}


In 1988, Svante Janson~\cite{Janson Svante} proposed a simple technique for deriving sufficient conditions for the asymptotic normality of bounded random variables $Y_1,\ldots,Y_T$ with a joint distribution described by the {\it dependency graph}.
Only one vertex in the dependency graph corresponds to each random variable $ Y_i $, and these vertices are connected by a set of edges. The following condition is satisfied:

if $\V_1$ and $\V_2$ are two disjoint subsets of graph vertices such that no edge of the graph has one endpoint in $V_1$ and the other in $V_2$, then the sets of random variables $\{Y_i,i\in \V_1\}$ and $\{Y_i,i\in \V_2\}$ are independent.

In~\cite{M91}, this approach was presented more generally and was subsequently used in numerous research studies devoted to the study of asymptotic distributions of  functionals depending on a sequence of independent random variables (\cite{T08,T09,Sh07,Sh08}). Finally, in 2015, M.~Tikhomirova and V.~Chistyakov~\cite{TihChis} proposed a modification of the method~\cite{Janson Svante} and~\cite{M91} which is applicable to the families of random variables with a complete dependency graph, but the majority of dependencies between the variables are weak.

\begin{remark}
A year after~\cite{TihChis}, the paper~\cite{Fer1} appeared on the site arXiv.org and was devoted to transferring the approach by~\cite{Janson Svante} and ~\cite{M91} to the case in which the dependency graph is a complete graph, but the majority of dependencies are weak. In~\cite{Fer1}, the joint distribution of a set of quantities is described by a weighted dependency graph in which each edge is assigned a numerical characteristic (weight), which describes the degree of dependence between adjacent variables  in a certain way. The form of asymptotic normality conditions in~\cite{Fer1} resembles similar conditions of~\cite{M91}, but the values included in the conditions are now determined by the weighted dependency graph. The results of applying the conditions by~\cite{Fer1} to specific problems are presented in~\cite{Fer1} and~\cite{Fer2}.
\end{remark}

We present the main result of M.~Tikhomirova and V.~Chistyakov~\cite{TihChis}. We assume that the joint distribution of the variables $Y_{1},\ldots,Y_T$, $|Y_i|\le F<\infty$, is determined by the {\it characterizing} graph $\varGamma $. This is an undirected graph with the  set of vertices $\V=\{1,\ldots,T\}$ and the following properties:

1) if the random variables $ Y_i $ and $ Y_j $ are dependent, then the vertices $i$ and $j$ are connected by an edge (in particular, the graph $\varGamma$ contains loops at all vertices);

2) for any natural number $ R,$ any subset $\V'\subset \V$, $|\V'|\le R$, and any of its partition $\{\V_{1},\V_{2}\}$ (i.e., $\V_{1}\cap \V_{2}=\varnothing$, $\V_{1}\cup \V_{2}=\V'$) such that there are no edges connecting vertices from $\V_{1}$ with vertices from $\V_{2}$, there is a number $\gamma_R\in (0,1)$ such that
\begin{equation}\label{eq:Chis1}
\left|\mathbf{E}\Bigl(\prod_{t\in \V'}Y_{t}\Bigr)-\mathbf{E}\Bigl(\prod_{t\in \V_{1}}Y_{t}\Bigr)\mathbf{E}\Bigl(\prod_{t\in \V_{2}}Y_{t}\Bigr)\right|\leq\gamma_R F^{|\V'|}.
\end{equation}

For any subset $\V'\subset \V,$ we define its set of strong dependencies $L(\V')$ as the set of those vertices of $\varGamma$ that are connected by the edges with vertices from the set $\V'$. Let $\mathscr{\mathcal{F}}(\V')=\sigma\{Y_{t},t\in \V'\}$
be $\sigma$-algebra generated by the random variables $\{Y_{t},t\in \V'\},$
$M=\sum_{t=1}^{T}\mathbf{E}|Y_{t}|$, and
\begin{equation}\label{eq:QR-def}
Q_R=\underset{\V'\subset \V:|\V'|\leq R}{\max}\sum_{t\in L(\V')}\mathbf{E}\left(|Y_{t}|\bigl|\mathcal{F}(\V')\right),\quad R=1,2,\ldots.
\end{equation}

We put
$$
S_{T}=\sum_{t=1}^{T}Y_{t},\quad S_{T}^{*}=(S_{T}-\mathbf{E}S_{T})/\sqrt{\mathbf{D}S_{T}}.
$$

We suppose that the joint distribution of the random variables $Y_{1},\ldots,Y_{T}$ depends on the natural number $n$ assumed as a parameter. All characteristics mentioned above also depend on $n$:
$T=T_n$, $F=F_n$, $\gamma_R=\gamma_{R,n}$, $M=M_n$, $Q_R=Q_{R,n},$ etc.

\begin{theorem}[Theorem~1 from \cite{TihChis}]\label{TC1}
For $n\rightarrow\infty,$ let the numbers $T_{n}\rightarrow\infty,$ the joint distribution of the random variables $Y_{1},\ldots,Y_{T_{n}},$ and the other parameters marked by index $n$ vary so that $b_{0}\in(0,2/3]$ exists such that for all $b\in(0,b_{0}]$ and any natural number $R$
\begin{equation}\label{eq:CondTh1}
\frac{M_{n}^{b}(Q_{R,n})^{2-b}}{\mathbf{D}S_{T_n}}+\gamma_{R,n}^{b}\frac{(F_{n}T_{n})^{2}}{\mathbf{D}S_{T_n}}\rightarrow 0.
\end{equation}
Then the moments and distribution function of the random variable $S_{T_{n}}^{*}$ converge to the moments and distribution function of the standard normal law.
\end{theorem}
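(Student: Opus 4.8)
The plan is to prove Theorem~\ref{TC1} by the method of moments, exploiting that the $Y_t$ are uniformly bounded. Since $|Y_t|\le F_n$, the normalized sum $S_{T_n}^{*}$ has finite moments of every order, and the standard normal law is uniquely determined by its moments; hence it suffices to show that for each fixed integer $k\ge 1$ one has $\mathbf{E}(S_{T_n}^{*})^{k}\to \mathbf{E}N^{k}$, where $N$ is standard normal, i.e. the limit is $(k-1)!!$ for even $k$ and $0$ for odd $k$. First I would pass to the centered variables $\bar Y_{t}=Y_{t}-\mathbf{E}Y_{t}$, which satisfy $|\bar Y_{t}|\le 2F_n$ and $\mathbf{E}\bar Y_{t}=0$, and check that the characterizing-graph data survive up to universal constants: the factorization bound~\eqref{eq:Chis1} and the conditional masses defining $Q_{R}$ in~\eqref{eq:QR-def} change only by bounded factors. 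Writing $\sigma^{2}=\mathbf{D}S_{T_n}$, the object to analyze is then
\begin{equation*}
\mathbf{E}(S_{T_n}^{*})^{k}=\sigma^{-k}\sum_{t_{1},\ldots,t_{k}=1}^{T_n}\mathbf{E}\bigl(\bar Y_{t_{1}}\cdots\bar Y_{t_{k}}\bigr).
\end{equation*}

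The core step is a diagram expansion. For each multi-index $(t_{1},\ldots,t_{k})$ I would partition the positions $\{1,\ldots,k\}$ into blocks according to the equivalence relation generated by ``$t_{i}=t_{j}$ or the vertices $t_{i},t_{j}$ are joined by an edge of $\varGamma$''. Distinct blocks are then unjoined in $\varGamma$, so, peeling off one block at a time, the factorization property~\eqref{eq:Chis1} (applied with $R=k$, since every set involved has at most $k$ vertices) lets me replace $\mathbf{E}(\prod_{i}\bar Y_{t_{i}})$ by the product of the block expectations up to a factorization error proportional to $\gamma_{k,n}$. Two features drive the result: any block consisting of a single position $i$ factors out a vanishing $\mathbf{E}\bar Y_{t_{i}}=0$, so surviving diagrams have all blocks of size at least $2$; and for a block that is a single pair $\{i,j\}$, summing $\mathbf{E}(\bar Y_{t_{i}}\bar Y_{t_{j}})$ over $t_{i},t_{j}$ (independent pairs contributing $0$) reproduces exactly $\sum_{s,t}\mathbf{E}(\bar Y_{s}\bar Y_{t})=\sigma^{2}$. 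Consequently the leading contribution comes precisely from the perfect pairings of $\{1,\ldots,k\}$, of which there are $(k-1)!!$, each contributing $\sigma^{k}$; after division by $\sigma^{k}$ this yields the Gaussian moment, while for odd $k$ no pairing exists and the main term is absent.

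It then remains to show that all remaining diagrams are $o(\sigma^{k})$, and this is where the two summands of~\eqref{eq:CondTh1} enter. The factorization errors, of type (A), isolate two ``free'' positions paying a bound $\gamma_{k,n}(F_nT_n)^{2}$ while the other $k-2$ positions pair up into $\sigma^{k-2}$; their total is $O\bigl(\gamma_{k,n}(F_nT_n)^{2}\sigma^{-2}\bigr)$ up to a constant depending on $k$. The diagrams containing a block of size $\ge 3$, of type (B), are bounded by a spanning-tree estimate: rooting the block at one vertex contributes the total mass $M_n=\sum_{t}\mathbf{E}|\bar Y_t|$, and each further vertex, being a strong dependency of an already-chosen one, contributes a factor controlled by $Q_{R,n}$ through~\eqref{eq:QR-def}, while the surplus positions again pair into powers of $\sigma$. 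Interpolating (Hölder-type) between the uniform bound carrying factors of $F_n$ and $M_n$ and the conditional-dependency bound carrying factors of $Q_{R,n}$, with exponent $b$, converts this surplus into the factor $M_n^{b}Q_{R,n}^{2-b}$, so that these contributions are $O\bigl(M_n^{b}Q_{R,n}^{2-b}\sigma^{-2}\bigr)$. Choosing any admissible $b\in(0,b_{0}]$ and invoking~\eqref{eq:CondTh1} with $R=k$ forces both families of errors to $0$, completing the moment convergence.

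The main obstacle I anticipate is the bookkeeping and uniformity of this diagram estimate rather than any single inequality: one must enumerate the (finitely many, for fixed $k$) block structures, track the exact powers of $F_n$, $M_n$, $Q_{R,n}$ and $\sigma$ attached to each, and verify that the excess over the pairing baseline is in every case captured by one of the two normalized quantities in~\eqref{eq:CondTh1}. The delicate technical point is the interpolation producing exactly $M_n^{b}Q_{R,n}^{2-b}$: one must split the contribution of an over-connected block between a ``bounded-mass'' estimate and a ``conditional-mass'' estimate in the right proportion so that a single exponent $b$ works uniformly over all block sizes $\ge 3$, which is precisely what the freedom ``$b\in(0,b_{0}]$'' in the hypothesis is designed to permit. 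Fixing $R=k$ throughout removes any need for uniformity in $R$, since property~\eqref{eq:Chis1} is only ever applied to vertex sets of size at most $k$.
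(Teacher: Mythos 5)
First, a point of orientation: the paper does not prove this statement at all --- it is quoted verbatim as Theorem~1 of \cite{TihChis}, and, as the paper's own introduction says, the proof there follows Janson's method of \emph{higher semiinvariants} (cumulants), upgraded by Mikhailov and by Tikhomirova--Chistyakov. Your proposal instead runs a direct moment--diagram expansion, and this difference is not cosmetic: your error bookkeeping does not close under hypothesis~\eqref{eq:CondTh1}. The critical flaw is the repeated claim that in error diagrams ``the other $k-2$ positions pair up into $\sigma^{k-2}$'' (and that surplus positions ``again pair into powers of $\sigma$''). Once a diagram contains an exceptional block, all remaining sums must be estimated with absolute values, and $\sum_{s\sim t}\left|\mathbf{E}(\bar Y_s\bar Y_t)\right|$ is \emph{not} bounded by $\sigma^{2}=\mathbf{D}S_{T_n}$ (the variance involves cancellation); the best available bound is of order $M_nQ_{R,n}$. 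Similarly, the factorization property~\eqref{eq:Chis1} gives a flat error $\gamma_{R,n}F_n^{k}$ \emph{per tuple}, so summing over a diagram class yields $\gamma_{R,n}F_n^{k}\cdot(\text{class size})$, i.e.\ up to $\gamma_{R,n}(F_nT_n)^{k}$, not your $\gamma_{R,n}(F_nT_n)^{2}\sigma^{k-2}$. Concretely, already at $k=5$ a diagram with one $3$-block and one pair is bounded, after these corrections, by $M_nQ^{2}\cdot M_nQ/\sigma^{5}=\bigl(M_n^{4/5}Q^{6/5}/\sigma^{2}\bigr)^{5/2}$, which corresponds to exponent $b=4/5$ --- strictly outside the range $b\in(0,b_{0}]$, $b_{0}\le 2/3$, that the hypothesis supplies. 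So no single fixed ``admissible $b$'', as you propose, can control all diagrams, and the terms that escape are genuine, not bookkeeping artifacts.

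The structure of~\eqref{eq:CondTh1} in fact encodes the cumulant proof and shows what your argument is missing. Mixed cumulants of a family that splits into two non-adjacent groups are $O(\gamma_{R}F^{k})$ by~\eqref{eq:Chis1} (they would vanish exactly under independence), so in the cumulant expansion only \emph{one} connected cluster survives, giving
$\left|\mathrm{cum}_k(S_{T_n})\right|\le C_{k}\bigl(M_nQ_{k,n}^{k-1}+\gamma_{k,n}(F_nT_n)^{k}\bigr)$;
normalizing by $\sigma^{k}$ and writing this as $\bigl(M_n^{b}Q_{k,n}^{2-b}/\sigma^{2}\bigr)^{k/2}+\bigl(\gamma_{k,n}^{b}(F_nT_n)^{2}/\sigma^{2}\bigr)^{k/2}$ with $b=2/k$ and $R=k$ shows exactly why the hypothesis is demanded for \emph{all} $b\in(0,b_{0}]$ and all $R$: the exponent needed varies with the cumulant order, a feature your fixed-$b$ reading loses. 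Your moment expansion, by contrast, retains products of several clusters (e.g.\ two $3$-blocks at $k=6$, corresponding to $\mathrm{cum}_3^2$ in the moment--cumulant formula), each carrying its own factor of $M_n$, and these are precisely the terms the hypothesis cannot absorb; repairing this forces you to organize cancellations of disconnected structures, i.e.\ to pass from moments to semiinvariants, which is the proof of \cite{TihChis}. A secondary gap: you invoke exact independence of non-adjacent pairs (``independent pairs contributing $0$''), but in the only use of this theorem in the paper (the graph $\varGamma_{n,m}$ of Lemma~\ref{l1}) non-adjacent summands are dependent, merely weakly so; the operative assumption is~\eqref{eq:Chis1} alone, under which each such pair costs $O(\gamma F^{2})$, feeding the $\gamma(F_nT_n)^{k}$ term rather than vanishing.
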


\section{Proofs of Theorems~\ref{th1} and~\ref{th2} for $U$-statistics}

The number of summands $w_n(j_1,\ldots,j_r)=f_{n;j_1,\ldots,j_r}(X_{n,j_1},\ldots,X_{n,j_r})$ in~\eqref{U} is $T_n=C_n^r$ (we recall that $C_n^r$ denotes  Binomial coefficient). We construct a characterizing graph for the family of random variables $w_n(j_1,\ldots,j_r)$ as follows. We define some positive integer number $m$ and denote the characterizing graph by $\varGamma_{n,m}$.

Each variable $w_n(j_1,\ldots,j_r)$ is one-to-one assigned to the vertex $\alpha=(j_1,\ldots,j_r)$ in the graph $\varGamma_{n,m}$.
We denote the set of vertices by $\V(\varGamma_{n,m})$. The total number of vertices in the graph $\varGamma_{n,m}$ is $T_n=C_n^r$.

The  set of edges is defined as follows:

1) the graph $\varGamma_{n,m}$ contains the loop at every vertex;

2) the vertices $\alpha=(j_1,\ldots,j_r)$ and $\tilde{\alpha}=(\tilde{j}_1,\ldots,\tilde{j}_r)$, $\alpha,\tilde{\alpha}\in \V(\varGamma_{n, m}),\alpha\neq \tilde{\alpha},$ are connected by the edge from $\varGamma_{n, m}$ if and only if at least one of inequalities holds:
\begin{equation}\label{eq:CondV}
|j_k-\tilde{j}_l|\le m,\quad k,l=1,\ldots,r.
\end{equation}

We show that $\varGamma_{n,m}$ satisfies the property~\eqref{eq:Chis1}. 

\begin{lemma}\label{l1}
For every $n,$  let $(X_{n,t})_{t=1}^{\infty}$ be a strictly stationary sequence of real random variables satisfying the absolute regularity condition~\eqref{eq:AR}. Then for any set $\V'\subseteq \V(\varGamma_{n, m})$ and  its partitions $\V_{1}\cup \V_{2}=\V'$ such that $\V_{1}\cap \V_{2}=\varnothing,$ where there are no edges with one endpoint in $\V_{1}$ and the other endpoint in~$\V_{2},$
\begin{gather}
\left|\mathbf{E}\Bigl(\prod_{\alpha\in \V'}f_{n;j_1,\ldots,j_r}(X_{n,j_1},\ldots,X_{n,j_r})\Bigr)\right.-\notag
\\
-\left.\mathbf{E}\Bigl(\prod_{\alpha\in \V_{1}}f_{n;j_1,\ldots,j_r}(X_{n,j_1},\ldots,X_{n,j_r})\Bigr)\mathbf{E}\Bigl(\prod_{\alpha\in \V_{2}}f_{n;j_1,\ldots,j_r}(X_{n,j_1},\ldots,X_{n,j_r})\Bigr)\right|\le\notag
\\
\label{Lemma1}
\le 8|V'|F_n^{|\V'|}\beta_n(m).
\end{gather} 
\end{lemma}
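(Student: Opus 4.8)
The plan is to bound the covariance-type difference on the left-hand side of \eqref{Lemma1} by reducing it to the absolute regularity coefficient $\beta_n(m)$. The crucial structural observation is that because there are no edges between $\V_1$ and $\V_2$, every index appearing in the tuples of $\V_1$ is separated from every index appearing in the tuples of $\V_2$ by a gap strictly greater than $m$: if some $j_k$ from a vertex in $\V_1$ and some $\tilde j_l$ from a vertex in $\V_2$ satisfied $|j_k-\tilde j_l|\le m$, then by the edge rule~\eqref{eq:CondV} those two vertices would be joined by an edge, contradicting the separation hypothesis. Hence, after collecting all indices used by $\V_1$ and all indices used by $\V_2$, the two index sets are separated by a time gap of more than $m$ along the stationary sequence.

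First I would let $G_1=\prod_{\alpha\in\V_1}f_{n;j_1,\ldots,j_r}(\ldots)$ and $G_2=\prod_{\alpha\in\V_2}f_{n;j_1,\ldots,j_r}(\ldots)$, so that the quantity to be estimated is $|\mathbf E(G_1G_2)-\mathbf E(G_1)\mathbf E(G_2)|$, a covariance between two bounded functionals measurable with respect to $\sigma$-algebras generated by two blocks of the sequence separated by a gap exceeding $m$. The standard coupling interpretation of absolute regularity~\eqref{eq:AR} then applies: for random variables measurable with respect to the ``past'' $\sigma$-algebra and the ``future'' $\sigma$-algebra separated by $m$ steps, one has the covariance inequality $|\mathbf E(G_1G_2)-\mathbf E(G_1)\mathbf E(G_2)|\le 2\|G_1\|_\infty\|G_2\|_\infty\,\beta_n(m)$, with the factor $2$ (or a small absolute constant) coming from the total-variation coupling underlying $\beta$. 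Since by stationarity one may shift the whole configuration so that the $\V_1$-block precedes the $\V_2$-block, the separation is exactly of the form to which the $\beta$-mixing covariance bound applies.

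The bookkeeping step is then to control the sup-norms of $G_1$ and $G_2$. Each factor $f_{n;j_1,\ldots,j_r}$ is bounded by $F_n$, so $\|G_1\|_\infty\le F_n^{|\V_1|}$ and $\|G_2\|_\infty\le F_n^{|\V_2|}$, whence the product is at most $F_n^{|\V_1|+|\V_2|}=F_n^{|\V'|}$. Combining with the covariance bound gives a bound of the shape $c\,F_n^{|\V'|}\beta_n(m)$ for an absolute constant $c$; tracking the constant through the precise coupling version of the inequality (and any minor accounting for the two blocks not being perfectly contiguous, which is where the extra factor $|\V'|$ enters) yields the stated constant $8|\V'|$.

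The main obstacle I anticipate is not the covariance inequality itself but cleanly justifying the reduction to a two-block separation in the stationary sequence: the indices appearing in $\V_1$ and $\V_2$ are each spread across possibly many non-contiguous positions in $\{1,\ldots,n\}$, so I must verify that the no-edge condition forces a single clean cut separating all $\V_1$-indices from all $\V_2$-indices by more than $m$. This requires arguing that, although the two index sets may individually be scattered, no $\V_1$-index can lie within distance $m$ of any $\V_2$-index, and then organizing them so that the mixing inequality between a left block and a right block can be invoked. Handling the possibility that the two sets interleave in time (without any pair being within $m$) is the delicate point, and I expect it is precisely this geometric step that accounts for the extra combinatorial factor $|\V'|$ in the final estimate~\eqref{Lemma1}.
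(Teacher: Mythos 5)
You correctly identify the structural fact that the no-edge hypothesis forces every index used by $\V_1$ to lie at distance greater than $m$ from every index used by $\V_2$, and your sup-norm bookkeeping ($\|G_1\|_\infty\|G_2\|_\infty\le F_n^{|\V'|}$) matches the paper. But there is a genuine gap at exactly the point you flag as ``delicate'': the reduction to a single past/future cut does not exist in general. The two index sets can interleave along the time axis --- for instance, with $r=3$ and $m=2$, a vertex of $\V_1$ may use positions $1,10,20$ and a vertex of $\V_2$ positions $5,15,25$; all pairwise distances exceed $m$, so there is no edge, yet no $t$ exists for which $G_1$ is measurable with respect to $\mathcal{F}_{-\infty}^{t}$ and $G_2$ with respect to $\mathcal{F}_{t+m}^{\infty}$. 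Hence the two-block covariance inequality you invoke simply does not apply, and your appeal to stationarity cannot repair this: stationarity permits a common shift of all indices, not a rearrangement of their relative order. The claimed bound $|\mathbf{E}(G_1G_2)-\mathbf{E}G_1\,\mathbf{E}G_2|\le 2\|G_1\|_\infty\|G_2\|_\infty\,\beta_n(m)$ is therefore unjustified in the interleaved case, and you give no actual argument producing the factor $|\V'|$ --- only the expectation that the unresolved geometric step will account for it.

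The paper resolves precisely this point, and it is the heart of its proof (Lemma~\ref{l2}). One splits the union of the two index sets into maximal alternating blocks $H_1,\ldots,H_{s_1+s_2}$ (consecutive blocks belong alternately to the $\V_1$-indices and the $\V_2$-indices, and successive blocks are separated by gaps exceeding $m$), and then replaces the blocks \emph{one at a time} by mutually independent copies, invoking Yoshihara's Lemma~1 at each step; each single replacement costs at most $4F'F''\beta(m)$, and the telescoping sum gives $4(s_1+s_2)F'F''\beta(m)$ for the difference between $\mathbf{E}(g_1g_2)$ and its fully independent version, under which the expectation factorizes. A second, analogous estimate compares $\mathbf{E}g_1\,\mathbf{E}g_2$ with its independent-copy counterpart, contributing another such term; the triangle inequality then yields the constant $8$ and the linear combinatorial factor in~\eqref{Lemma1}. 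This multi-block replacement argument (or some equivalent device for interleaved index sets) is the missing idea; without it, your proposal is not a proof but a reduction to the very difficulty it needs to overcome.
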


The inequality~\eqref{Lemma1} shows that the condition~\eqref{eq:Chis1} is satisfied for the graph $\varGamma_{n,m}$ for every $n$ and
\begin{equation}\label{gamma}
\gamma_R=\gamma_{R,n,m}=8 R \beta_n(m).
\end{equation}


We need the following statement to derive Lemma~\ref{l1}.

Let $I=(i_1,\ldots,i_{|I|})$, $1\le i_1<\ldots < i_{|I|}\le n$,  and $I'=(i'_1,\ldots,i'_{|I'|})$, $1\le i'_1<\ldots < i'_{|I'|}\le n$, be the sets of natural numbers, and $g_1$ and $g_2$ be measurable functions of $|I|$ and $|I'|$ real variables, respectively.

\begin{lemma}\label{l2}
Let the strictly stationary sequence of random variables $(X_t)_{t=1}^{\infty}$ satisfy the absolute regularity condition~\eqref{eq:AR} with the coefficient $\beta(m)$, $|g_1(X_i; i\in I)|\le F'$, $|g_2(X_{i'}; i'\in I')|\le F'',$ and
\begin{equation}\label{Lemma2*cond}
\min_{i\in I,\, i'\in I'}\{|i-i'|> m\}.
\end{equation}
Then
\begin{gather}
\left|\mathbf{E}\bigl\{g_1(X_i; i\in I)g_2(X_{i'}; i'\in I')\bigr\}-\mathbf{E}g_1(X_i; i\in I)\mathbf{E}g_2(X_{i'}; i'\in I')\right| \le\notag
\\
\le 8(|I|+|I'|)F' F'' \beta(m).\label{Lemma2*}
\end{gather} 
\end{lemma}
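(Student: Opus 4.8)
The plan is to reduce the bound on the difference of expectations to the absolute regularity coefficient $\beta(m)$ via a blocking/independent-copy argument. First I would observe that condition~\eqref{Lemma2*cond} says the index sets $I$ and $I'$ are separated by a gap of more than $m$; after relabeling I may assume without loss of generality that all indices in $I$ lie to the left of all indices in $I'$, so that $g_1$ is measurable with respect to $\mathcal{F}_{-\infty}^{a}$ and $g_2$ is measurable with respect to $\mathcal{F}_{a+m+1}^{\infty}$ for the appropriate split point $a$. The core idea is to compare the true joint law of $(X_i;i\in I)$ and $(X_{i'};i'\in I')$ with the product of their marginals, and to bound the resulting error by the total variation distance controlled by $\beta(m)$.

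The key step is to use the coupling characterization of the $\beta$-mixing coefficient. Specifically, the quantity $\beta(m)$ in~\eqref{eq:AR} equals (up to the standard factor) the expected total variation distance between the conditional distribution of the future given the past and the unconditional distribution of the future, when the two blocks are separated by $m$. I would write the covariance-type difference
\[
\mathbf{E}\{g_1 g_2\}-\mathbf{E}g_1\,\mathbf{E}g_2
=\mathbf{E}\bigl\{g_1\bigl(\mathbf{E}(g_2\mid\mathcal{F}_{-\infty}^{a})-\mathbf{E}g_2\bigr)\bigr\},
\]
and then bound the inner conditional expectation difference in absolute value by $2F''$ times the total variation distance between $\mathbf{P}\{\,\cdot\mid\mathcal{F}_{-\infty}^{a}\}$ and $\mathbf{P}$ restricted to $\mathcal{F}_{a+m+1}^{\infty}$. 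Taking the outer expectation and using $|g_1|\le F'$ yields a bound of the shape $2F'F''\,\mathbf{E}\{\text{TV}\}$. The factor $|I|+|I'|$ will enter because one cannot split exactly at a single gap: the functions $g_1,g_2$ depend on several coordinates, and to isolate the separating gap one telescopes coordinate by coordinate, paying one $\beta(m)$-type term for each of the $|I|+|I'|$ indices, while the combinatorial constant $8$ absorbs the factors of $2$ from the total-variation-to-$\beta$ conversion and from the two-sided estimate.

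I expect the main obstacle to be the precise bookkeeping in the coupling/telescoping argument: $\beta(m)$ as defined in~\eqref{eq:AR} controls dependence between the \emph{entire} past $\mathcal{F}_{-\infty}^{0}$ and the \emph{entire} future $\mathcal{F}_{m}^{\infty}$ across a single cut, whereas $g_1$ and $g_2$ generally involve scattered indices on each side. The careful point is to verify that after grouping the left indices as ``past'' and the right indices as ``future'' and shifting by stationarity so the gap straddles the origin, the separation~\eqref{Lemma2*cond} guarantees the relevant $\sigma$-algebras sit at distance exceeding $m$, so that the single coefficient $\beta(m)$ dominates. The constant $8(|I|+|I'|)$ is then obtained by a direct, if slightly tedious, accounting of the telescoped terms; I would not grind through this explicitly but would cite the coupling form of $\beta$-mixing (e.g.\ \cite[p.\,3]{Dou}) to convert the total variation bound into $\beta(m)$ and collect constants.
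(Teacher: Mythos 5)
Your proposal contains a genuine gap at its very first step: the claim that, after relabeling, one may assume all indices of $I$ lie to the left of all indices of $I'$. Condition~\eqref{Lemma2*cond} only forces every index of $I$ to be at distance more than $m$ from every index of $I'$; it does not prevent the two sets from \emph{interleaving}. For instance, $I=\{1,100\}$, $I'=\{50\}$, $m=10$ satisfies~\eqref{Lemma2*cond}, yet no single cut point $a$ separates $I$ from $I'$, and no relabeling can create one, since the positions index a stationary (non-exchangeable) sequence and cannot be permuted without changing the joint law. Consequently your covariance identity $\mathbf{E}\{g_1g_2\}-\mathbf{E}g_1\,\mathbf{E}g_2=\mathbf{E}\bigl\{g_1\bigl(\mathbf{E}(g_2\mid\mathcal{F}_{-\infty}^{a})-\mathbf{E}g_2\bigr)\bigr\}$, with $g_1$ past-measurable and $g_2$ future-measurable, is simply unavailable in the general case, and your later ``careful point'' (group the left indices as past and the right indices as future) runs into exactly the same obstruction. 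A further sign that the accounting is off: if a single clean cut did exist, no telescoping would be needed at all and the bound would be of order $F'F''\beta(m)$ with an absolute constant; the factor $|I|+|I'|$ in~\eqref{Lemma2*} is not the price of ``telescoping coordinate by coordinate'' across one gap, but precisely the price of interleaving.

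The paper's proof handles this by partitioning $I\cup I'$ into maximal consecutive blocks $H_1,\ldots,H_{s_1+s_2}$, each block lying entirely inside $I$ or entirely inside $I'$ and alternating along the line (so $s_1+s_2\le|I|+|I'|$); by~\eqref{Lemma2*cond} consecutive blocks are separated by gaps exceeding $m$. It then replaces the blocks one at a time by mutually independent copies, each replacement costing at most $4F'F''\beta(m)$ by Yoshihara's lemma (which plays exactly the role of your coupling/total-variation step, applied across one gap at a time); telescoping over the blocks gives $4(|I|+|I'|)F'F''\beta(m)$ for the joint expectation, the same argument applied separately to $\mathbf{E}g_1$ and $\mathbf{E}g_2$ gives the same bound for the product of expectations, and since the fully decoupled version factorizes exactly, the triangle inequality yields the constant $8$. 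If you substitute this alternating-block decomposition for your false reduction, the rest of your argument (the coupling formulation of $\beta$-mixing in place of the citation of Yoshihara's lemma) can be made to go through.
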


\begin{proof}[Proof of Lemma~$\ref{l2}$] The sets $I$ and $I'$ can be one-to-one split into disjoint subsets $I=I_1\cup\ldots\cup I_{s_1}$ and $I'=I'_1\cup\ldots\cup I'_{s_2}$, where $1\le s_1,s_2\le |I|+|I'|$, $|s_1-s_2|\in \{0,1\}$, satisfying one of the following two conditions:
\begin{gather}
\max_{i'\in I'_k}{i'}<\min_{i\in I_k}{i}, \quad \max_{i\in I_k}{i}<\min_{i'\in I'_{k+1}}{i'}, \quad k=1,\ldots,\max\{s_1,s_2\}-1,\label{order1*}
\\
\max_{i\in I_k}{i}<\min_{i'\in I'_k}{i'}, \quad \max_{i'\in I'_k}{i'}<\min_{i\in I_{k+1}}{i}, \quad k=1,\ldots,\max\{s_1,s_2\}-1.\label{order2*}
\end{gather}
We put
\begin{gather*}
X(I_k)=\left(X_i;\,i\in I_k\right), \quad k=1,\ldots, s_1,
\\
 X(I'_k)=(X_{i'};\,i'\in I'_k), \quad k=1,\ldots, s_2.
\end{gather*}
Then, we can write
\begin{gather*}
g_1(X_i; i\in I)=g_1\left(X(I_1),\ldots,X(I_{s_1})\right),
\\
g_2(X_{i'}; i'\in I')=g_2\left(X(I'_1),\ldots,X(I'_{s_2})\right).
\end{gather*}

According to the order~\eqref{order1*},~\eqref{order2*}, we denote $I_k$ and $I'_k$ by combined notation  $H_k$. In the first case $H_1=I'_1,H_2=I_1,H_3=I'_2,\ldots$, and in the second case $H_1=I_1,H_2=I'_1,H_3=I_2,\ldots.$
Therefore, we use the notation 
$X(H_k)=(X_i;\, i\in H_k)$, $k=1,\ldots,s_1+s_2$. It follows from the definitions that 
\begin{equation}\label{order3}
\min_{i\in H_{k+1}}{i}-\max_{i\in H_k}{i}>m, \quad k=1,\ldots,s_1+s_2-1.
\end{equation}

Let $\tilde X(I_k)$,
$\tilde X(I'_k)$, $\tilde X(H_k)$ be mutually independent copies of the random variables $X(I_k)$, $k=1,\ldots, s_1$, $X(I'_k)$, $k=1,\ldots, s_2$, $X(H_k)$, $k=1,\ldots, s_1+s_2$.

We use the notation
\begin{equation}\label{h}
h(x_i; i\in I\cup I')= g_1(x_i; i\in I)g_2(x_{i'};i'\in I')
\end{equation}
and
\begin{equation}\label{h_sigma1}
h_{\sigma}(x_i; i\in I\cup I') = h(x_{\sigma_i}; i\in I\cup I'),
\end{equation}
where $\sigma=(\sigma_1,\ldots,\sigma_{s_1+s_2})$ is a permutation of the numbers $1,2,\ldots, s_1+s_2$ such that
\begin{equation}\label{h_sigma2}
h(X_i: i\in I\cup I')=h_{\sigma}\bigl(X(H_1),\ldots,X(H_{s_1+s_2})\bigr).
\end{equation}

We use one result of K.~Yoshihara (see~\cite{Yosh}, Lemma 1). It refers to a more general case and, in particular, instead of the boundedness of the function $h,$ it requires the property $\mathbf{E}|h(x_i; i\in I\cup I')|^{1+\delta} \le B_\delta<\infty$ for some $\delta>0$.

In our case, $B_\delta \le (F'F'')^{1+\delta}<\infty$ for every $\delta>0$. Thus, Yoshihara's lemma gives that
\begin{gather*}
\bigl|\mathbf{E}h_{\sigma}\bigl(X(H_1),X(H_2),\ldots,X(H_{s_1+s_2})\bigr)-
\\
-\mathbf{E}h_{\sigma}\bigl(\tilde X(H_1),X(H_2),\ldots,X(H_{s_1+s_2})\bigr)\bigr|\le
\\
\le   \inf_{\delta>0}{\left\{4B_\delta^{1/(1+\delta)}(\beta(m))^{\delta/(1+\delta)}\right\}}\le   4F' F''\beta(m).
\end{gather*}
Analogously, we derive the inequality 
\begin{gather*}
\bigl|\mathbf{E}h_{\sigma}\bigl(x(H_1),X(H_2),X(H_3),\ldots,X(H_{s_1+s_2})\bigr)-
\\
-\mathbf{E}h_{\sigma}\bigl(x(H_1),\tilde X(H_2),X(H_3),\ldots,X(H_{s_1+s_2})\bigr)\bigr|\le 4F' F''\beta(m).
\end{gather*}
for the function $h_{\sigma}\bigl(x(H_1),X(H_2),X(H_3),\ldots,X(H_{s_1+s_2})\bigr)$, where $x(H_1)= (x_i; i\in H_1)$.
It follows that
\begin{gather*}
\bigl|\mathbf{E}h_{\sigma}\bigl(\tilde X(H_1),X(H_2),X(H_3),\ldots,X(H_{s_1+s_2})\bigr)-
\\
-\mathbf{E}h_{\sigma}\bigl(\tilde X(H_1),\tilde X(H_2),X(H_3),\ldots,X(H_{s_1+s_2})\bigr)\bigr|\le 4 F' F'' \beta(m).
\end{gather*}
We carry out similar estimates for other differences. The last among them is
\begin{gather*}
\bigl|\mathbf{E}h_{\sigma}\bigl(\tilde X(H_1),\tilde X(H_2),\ldots,\tilde X(H_{s_1+s_2-1}),X(H_{s_1+s_2})\bigr)-
\\
-\mathbf{E}h_{\sigma}\bigl(\tilde X(H_1),\tilde X(H_2),\ldots,\tilde X(H_{s_1+s_2-1}),\tilde X(H_{s_1+s_2})\bigr)\bigr|\le 4F' F''\beta(m).
\end{gather*}
Summarizing the obtained estimates and using the triangle inequality, we obtain
\begin{gather*}
\bigl|\mathbf{E}h_{\sigma}\bigl(X(H_1),X(H_2),\ldots,X(H_{s_1+s_2})\bigr)-
\\
-\mathbf{E}h_{\sigma}\bigl(\tilde X(H_1),\tilde X(H_2),\ldots,\tilde X(H_{s_1+s_2})\bigr)\bigr|\le
\end{gather*}
\begin{equation}\label{eq100}
\le  4(s_1+s_2) F' F'' \beta(m)= 4(|I|+|I'|) F' F''\beta(m).
\end{equation}

It follows from~\eqref{h},  \eqref{h_sigma1}, \eqref{h_sigma2}, \eqref{eq100} that
\begin{gather}
\bigl|\mathbf{E}\bigl\{g_1\bigl(X(I_1),X(I_2),\ldots,X(I_{s_1})\bigr)g_2\bigl(X(I'_1),X(I'_2),\ldots,X(I'_{s_2})\bigr)\bigr\}-\notag
\\
-\mathbf{E}\bigl\{g_1\bigl(\tilde X(I_1),\tilde X(I_2),\ldots, \tilde X(I_{s_1})\bigr)g_2\bigl(\tilde X(I'_1), \tilde X(I'_2),\ldots, \tilde X(I'_{s_2})\bigr)\bigr\}\bigr|\le \notag
\\
\le 4(|I|+|I'|) F' F''\beta(m).\label{g1g2}
\end{gather}

Analogously~\eqref{eq100}, we derive the inequalities
\begin{gather*}
\left|\mathbf{E}g_1\bigl(X(I_1),\ldots, X(I_{s_1})\bigr)-\mathbf{E}g_1\bigl(\tilde X(I_1),\ldots,\tilde X(I_{s_1})\bigr)\right|\le
 4|I| F'\beta(m),
\\
\left|\mathbf{E}g_2\bigl(X(I'_1),\ldots, X(I'_{s_2})\bigr)-\mathbf{E}g_2\bigl(\tilde X(I'_1),\ldots,\tilde X(I'_{s_2})\bigr)\right|\le
  4|I'| F'' \beta(m).
\end{gather*}
It follows from these inequalities, the inequalities
$$
\mathbf{E}\bigl|g_1\bigl(X(I_1),\ldots, X(I_{s_1})\bigr)\bigr| \le F', \quad \mathbf{E}\bigl|g_2\bigl(\tilde X(I'_1),\ldots, \tilde X(I'_{s_2})\bigr)\bigr| \le F''
$$
and the triangle inequality that
\begin{gather}
\Bigl|\mathbf{E}g_1\bigl(X(I_1),\ldots, X(I_{s_1})\bigr)\mathbf{E}g_2\bigl(X(I'_1),\ldots, X(I'_{s_2})\bigr) -\notag
\\
- \mathbf{E}g_1\bigl(\tilde X(I_1),\ldots,\tilde X(I_{s_1})\bigr)\mathbf{E}g_2\bigl(\tilde X(I'_1),\ldots,\tilde X(I'_{s_2})\bigr)\Bigr|\le \notag
\\
\le \mathbf{E}\bigl|g_1\bigl(X(I_1),\ldots, X(I_{s_1})\bigr)\bigr|\Bigl|\mathbf{E}g_2\bigl(X(I'_1),\ldots, X(I'_{s_2})\bigr) - \mathbf{E}g_2\bigl(\tilde X(I'_1),\ldots,\tilde X(I'_{s_2})\bigr)\Bigr|+\notag
\\
+\mathbf{E}\bigl|g_2\bigl(\tilde X(I'_1),\ldots, \tilde X(I'_{s_2})\bigr)\bigr| \Bigl|\mathbf{E}g_1\bigl(X(I_1),\ldots, X(I_{s_1})\bigr) -
\mathbf{E}g_1\bigl(\tilde X(I_1),\ldots, \tilde X(I_{s_1})\bigr)\Bigr|\le\notag
\\
\le 4(|I|+|I'|)F' F'' \beta(m).\label{eq101c}
\end{gather}

The formulas~\eqref{g1g2}, \eqref{eq101c} and the triangle inequality give~\eqref{Lemma2*}. 
\end{proof}

\begin{proof}[Proof of Lemma~$\ref{l1}$] 
Lemma~\ref{l1} is immediate from Lemma~\ref{l2}, if we consider that $|f_{n;j_1,\ldots,j_r}(x_1,\ldots,x_r)|\le F_n$ and $|I|+|I'|\le |\V'|$.
Thus,~\eqref{Lemma2*} leads to~\eqref{Lemma1}. 
\end{proof}

\begin{proof}[Proof of Theorem~$\ref{th1}$]
We estimate the quantities in the condition~\eqref{eq:CondTh1} for our case. We begin with 
\begin{equation}\label{QRm}
Q_{R,n,m}=\underset{\V'\subset \V(\varGamma_{n,m}):|\V'| \leq R}{\max}\sum_{\tilde{\alpha}\in L(\V')}\mathbf{E}\left(|w_n({\tilde{\alpha}})|\bigl|\sigma\{w_n({\alpha}),\alpha\in \V'\}\right),
\end{equation}
where we recall $w_n({\alpha})=f_n(X_{n,j_1},\ldots, X_{n,j_r})$.

The set of strong dependencies $L(\alpha)=L_{n,m}(\alpha)$ for the vertex $\alpha=(j_1,\ldots,j_r)$ in the graph $\varGamma_{n,m}$ is the set of vertices $\tilde{\alpha}=(\tilde{j}_1,\ldots,\tilde{j}_r)\in \V(\varGamma_{n,m})$ for which at least one of the inequalities~\eqref{eq:CondV} holds. The number of the elements of this set satisfies the inequality 
\begin{equation}\label{L1}
|L(\alpha)|\le r^2(2m+1) C_n^{r-1}.
\end{equation}
The set of strong dependencies $L(\V')=L_{n,m}(\V')$ for the set $\V'\subset \V(\varGamma_{n,m})$ is given by the formula 
$$
L(\V') =\bigcup_{\alpha\in \V'}L(\V')
$$
and satisfies the inequality
\begin{equation}\label{|L|}
|L(V')| \le  r^2|\V'|(2m+1)C_n^{r-1}.
\end{equation}
From formulas~\eqref{|L|}, $|f_n(x,y)|\le F_n$ and~\eqref{QRm}, we have
\begin{equation}\label{QRm-1}
Q_{R,n,m} \le  r^2 R F_n (2m+1) C_n^{r-1}.
\end{equation}

We note that, in the case under consideration,
\begin{equation}\label{M}
M_n=\sum_{1\le j_1<\ldots <j_r \le n}\mathbf{E}|f_{n;j_1,\ldots,j_r}(X_{n,j_1},\ldots,X_{n,j_r})|\le F_nC_n^r.
\end{equation}

The use of~\eqref{Lemma1}, \eqref{QRm-1}, \eqref{M}, equality~\eqref{gamma}, and the  above cited Theorem~1 from~\cite{TihChis} leads to the following condition of the asymptotic normality for~$U_n$ in the triangular array scheme: for every natural number $R$ and all $b\le 2/3$
$$
\frac{F_n^2 m_n^{2-b}n^{2(r-1)+b}r^{4-2b}}{\mathbf{D}U_{n}}+\bigl(\beta_n(m_n)\bigr)^{b}\frac{F_n^{2} n^{2r}}{\mathbf{D}U_{n}}\rightarrow 0.
$$
\end{proof}

\begin{proof}[Proof of Theorem~$\ref{th2}$] Let the number~$F_n$ be independent of $n$, $\beta_n(m)\le m^{-h(m)}$, where $h(m)\to\infty$ ($m\to\infty$), and $\mathbf{D}U_n\ge Cn^{2(r-1)+\varkappa}$, where $C, \varkappa>0$. In this case~\eqref{eq:CondTheorem2} follows from the formula:
\begin{equation}\label{eq:CondTheorem2a}
m_n^{2-b}n^{b-\varkappa}+m_n^{-h(m_n)b} n^{2(r-1)-\varkappa}\rightarrow 0.
\end{equation}
Let us examine this. We put
$$
m_n = \left[n^{(\varkappa-b_0)/4}\right],\quad 0<b_0<\min\left\{2/3,\varkappa\right\},
$$
then, the expression in the left side~\eqref{eq:CondTheorem2a} for $0<b\le b_0$ and $n\to\infty$ can be estimated by
$O\left(n^{(b_0-\varkappa)/2}\right)\rightarrow 0$.
\end{proof}

\section{Remarks About Proofs of Theorems~\ref{th1a} and~\ref{th2a} for $V$-Statistics}

The proofs of Theorems~\ref{th1} and~\ref{th2} for $V$-statistics are completely the same, with the only difference  that the graph $\Gamma_{n, m} $ contains $n^r$ vertices, and in the formulas~\eqref {L1},~\eqref{|L|},~\eqref {QRm-1}, and~\eqref{M} the binomial coefficients $C_n^s $ must be replaced by $n^s,$ $s=r-1, r.$ The indicated replacement does not affect the form of the condition~\eqref{eq:CondTheorem2}.

For example, consider the formula~\eqref{L1}. Number of elements $\tilde{\alpha}=(\tilde{j}_1,\ldots,\tilde{j}_r): \tilde{j}_1,\ldots,\tilde{j}_r=1,\ldots,n,$ for which at least one of the inequalities~\eqref{eq:CondV} holds, can be estimated as follows. First, we select the indices $k$ and $l$,  there are $2m+1$ elements satisfying~\eqref{eq:CondV} for each such pair, and the rest of the elements can be any. Thus,
$$
|L(\alpha)|\le r^2 (2m+1)n^{r-1}.
$$
The remaining calculations are carried out similarly.

\end{document}